\theoremstyle{plain}
\numberwithin{equation}{section}
\newtheorem{thm}{Theorem}[section]
\newtheorem{lm}[thm]{Lemma}
\theoremstyle{definition}
\newtheorem{example}{ Example}
\theoremstyle{remark}
\date{}
\title{A new approach of the Chebyshev wavelets  for the variable-order time  fractional mobile-immobile advection-dispersion model}
\author{M. H. Heydari\\
\footnotesize{Faculty of Mathematics, Yazd University, Yazd, Iran. }  \\
\footnotesize{The Laboratory of Quantum Information Processing, Yazd University, Yazd, Iran.}  \\
 \footnotesize{e-mail: heydari@stu.yazd.ac.ir.}}
\begin{document}
\maketitle

\begin{abstract}
This paper proposes a new numerical method based on the Chebyshev wavelets (CWs) to solve the variable-order time  fractional mobile-immobile advection-dispersion equation. To do this, a new  operational matrix of variable-order fractional derivative in the Caputo sense for the  CWs is derived and is used  to obtain an approximate solution for the problem under study. Along the way a new family of piecewise functions is introduced and employed to  derive a general method to compute this matrix.
The main advantage behind the proposed approach is that the problem under consideration is transformed into  a linear system of algebraic equations. So, it can be solved simply to obtain an approximate solution. The efficiency and accuracy  of the proposed method are shown for some concrete examples. These results show that the proposed method is  very  efficient and accurate.
\end{abstract}

{\bf Keywords:}  Chebyshev wavelets (CWs); Operational matrix of variable-order fractional derivative;
 Variable-order time  fractional mobile-immobile advection-dispersion equation;  Caputo's variable-order fractional derivative.

{\bf Mathematics Subject Classification 2010:} 35R11.

\section{Introduction}
Variable-order fractional derivatives, which are an extension of constant-order fractional ones  have been introduced in several physical applications \cite{e1,e2,e3}. Recently, some  researchers  \cite{1,2,3,20,21,22,23,24} have shown that many complex physical models can be described via variable-order derivatives  with a great success.  It is worth noting that analytically handling equations described by the variable-order fractional derivatives is difficult due to their highly complex, so proposing efficient methods to find their numerical solutions is of great  importance in practical. So, recently several methods have been proposed to solve variable-order fractional differential equations numerically such as \cite{4,5,6,7,8,9,10,11,12,13,14,15,16,17,18,19}.\\
Wavelets theory which is a relatively new  area in mathematical research has been applied in a wide range of
engineering disciplines \cite{H0}. In recent years, wavelets have been applied for solving different types of partial differential equations e.g. \cite{H0,H1,H2,H3}.\\
The aim of this paper is to propose a new numerical method based on the CWs to solve the following  variable-order time fractional  mobile-immobile advection-dispersion model \cite{H01}:
\begin{equation}\label{D1}
\alpha_{1}\frac{\partial u(x,t)}{\partial t}+\alpha_{2}\,\prescript{c}{0}{D_{t}^{\gamma(x,t)}}u(x,t)=-\mu_{1}\frac{\partial u(x,t)}{\partial x}+\mu_{2}\frac{\partial^{2} u(x,t)}{\partial x^{2}}+f(x,t),\hspace{0.5cm}(x,t)\in \Omega,
\end{equation}
with $\Omega=[0,1]\times [0,1]$, subject to the  following initial-boundary conditions:
\begin{equation}\label{D2}
\begin{array}{ll}
  u(x,0)=g(x), &  \\  \noalign{\medskip}
  u(0,t)=h_{1}(t), & u(1,t)=h_{2}(t),
\end{array}
\end{equation}
where $\alpha_{1}\geq 0$, $\alpha_{2}\geq 0$, $\mu_{1}> 0$, $\mu_{2}> 0$,  $0<\gamma(x,t)\leq 1$, $f(x,t)$, $g(x)$, $h_{1}(t)$ and $h_{2}(t)$ are given functions, and   $\prescript{c}{0}{D_{t}^{\gamma(x,t)}}$ denotes the variable-order fractional derivatives in the Caputo sense of order $0<\gamma(x,t)\leq 1$, as defined by \cite{11,12}:
\begin{equation}\label{de1}
 \prescript{c}{0}{D_{t}^{\gamma(x,t)}}u(x,t)=\frac{1}{\Gamma\left(1-\gamma(x,t)\right)}\int_{0}^{t}\left(t-s\right)^{-\gamma(x,t)}\frac{\partial u(x,s)}{\partial s}ds,\hspace{0.5cm}t>0.
\end{equation}
It is worth noting that based on the definition of the variable-order fractional derivative in the Caputo sense as \cite{12}, we have the following useful property:
\begin{equation}\label{de3}
\prescript{c}{0}{D_{t}^{\vartheta(x,t)}}t^{m}=
\displaystyle \left\{\begin{array}{cl}
\displaystyle\frac{\Gamma(m+1)}{\Gamma(m-\vartheta(x,t)+1)}\,t^{m-\vartheta(x,t)}, & q\leq m \in \mathbb{N}, \\ \noalign{\medskip}
0, & o.w,
\end{array}
\right.
\end{equation}
where $q-1<\vartheta (x,t)\leq q$.\\
 To solve equation in (\ref{D1}), we first derive a new  operational matrix of variable-order fractional derivative in the Caputo sense for the  CWs and then, employ this matrix to obtain an approximate solution for the problem at hand. Along the way, a new family of piecewise functions is introduced and employed to  derive a general procedure for forming this matrix.\\
In the proposed method, at first the solution of the problem at hand is expanded in terms of  the  CWs. Then, by computing
the operational matrix of variable-order fractional derivative and using some properties of these basis polynomials, we  transform
 its solution to the solution of a linear system of algebraic equations.  This greatly simplifies the process of solving the problem as well as help to achieve  an approximate solution for the problem.\\
The  remainder of this paper is organized as follows: In section \ref{S2},  the   CWs and their properties  are introduced. In section \ref{S3}, the operational matrix of variable-order fractional derivative  for the  CWs is derived and in section \ref{S4},
 the proposed  method is described for solving the problem under study.   Section \ref{S5}, contains some numerical examples which are solved using the proposed method. Finally, a conclusion is given in section \ref{S6}.
 \section{The CWs  and their properties}\label{S2}
 Wavelets constitute a family of functions constructed from dilation and translation of a single function $\psi(t)$ called the
mother wavelet. When the dilation parameter $a$ and the translation parameter $b$ vary continuously we have the following family of continuous wavelets as:
\begin{equation}
\psi_{ab}(t)=|a|^{-\frac{1}{2}}\psi\left(\frac{t-b}{a}\right),\hspace{0.5cm}a,\,b\in\mathbb{R},\,\,a\neq0.
\end{equation}
If we restrict the parameters $a$ and $b$ to discrete values as $a=a_{0}^{-k}$, $b=nb_{0}a_{0}^{-k}$, where $a_{0}>1$, $b_{0}>0$, we have the following family of discrete wavelets
\begin{equation}
\psi_{kn}(t)=|a_{0}|^{\frac{k}{2}}\psi\left(a_{0}^{k}t-nb_{0}\right),\hspace{0.5cm}k,\,n\in\mathbb{Z},
\end{equation}
where the functions $\psi_{kn}(t)$ form a wavelet basis for $L^{2}(\mathbb{R})$.\\
 In practice, when $a_{0}=2$ and $b_{0}=1$, the functions $\psi_{kn}(t)$ form an orthonormal basis.\\
The CWs  are defined on the interval $[0,1]$ by \cite{MM1,MM2}:
\begin{equation}
\psi_{nm}(t)=\displaystyle\left\{\begin{array}{ccl}
\displaystyle \beta_{m}2^{\frac{k}{2}}T^{\ast}_{m}\left(2^{k}t-n\right), & &\displaystyle t\in\left[\frac{n}{2^{k}},\frac{n+1}{2^{k}}\right], \\ \noalign{\medskip}
0, && o.w,
\end{array}\right.
\end{equation}
for $n=0,1,\ldots,2^{k}-1$, $m=0,1,\ldots,M-1$, $(k,M)\in \mathbb{N}$, where
\begin{equation}
\beta_{m}=\displaystyle \left\{\begin{array}{ccc}
\displaystyle \sqrt{\frac{2}{\pi}}, && m=0, \\  \noalign{\medskip}
 \displaystyle \frac{2}{\sqrt{\pi}}, && m\geq 1,
 \end{array}\right.
\end{equation}
and $T^{\ast}_{m}(t)$ denotes the shifted Chebyshev polynomials, which are defined on the interval $[0,1]$ as:
\begin{equation}
\begin{array}{l}
 \displaystyle T^{\ast}_{0}(t)=1, \\  \noalign{\medskip}
 \displaystyle T^{\ast}_{m}(t)=m\sum_{i=0}^{m}(-1)^{m-i}\frac{2^{2i}(m+i-1)!}{(m-i)!(2i)!}\,t^{i},\hspace{0.5cm}m=1,2,\ldots.
\end{array}
\end{equation}
The set of the CWs is an orthogonal set with respect to the weight function $w_{n}(t)$ where
\begin{equation}
\displaystyle w_{n}(t)=\left\{\begin{array}{ccl}
\displaystyle \frac{1}{\sqrt{1-\left(2^{k+1}t-2n-1\right)^2}}, && \displaystyle t\in\left[\frac{n}{2^{k}},\frac{n+1}{2^{k}}\right], \\ \noalign{\medskip}
0, && o.w.
\end{array}
\right.
\end{equation}
The CWs can be used to expand any function $u(t)$ which is defined over $[0,1]$ as:
\begin{equation}\label{n0}
u(t)=\sum_{n=0}^{\infty}\sum_{m=0}^{\infty}{c_{nm}\psi_{nm}(t)},
\end{equation}
where $c_{nm}=\left< u(t),\psi_{nm}(t)\right>$ and $\left<.,.\right>$ denotes the inner product in $L_{w_{n}}^{2}[0,1]$.\\
 By truncating  the infinite series in equation (\ref{n0}), $u(t)$ is approximated as:
\begin{equation}
\label{e27} u(t)\simeq
\sum_{n=0}^{2^{k}-1}\sum_{m=0}^{M-1}{c_{nm}\psi_{nm}(t)=C^{T}\Psi(t)},
\end{equation}
where  $C$ and $\Psi(t)$ are  column vectors with $\hat{m}=2^{k}M$ elements.\\
For simplicity, equation (\ref{e27}) is written as:
\begin{equation}
 u(t)\simeq
\sum_{i=1}^{\hat{m}}{c_{i}\psi_{i}(t)=C^{T}\Psi(t)},
\end{equation}
where $c_{i}=c_{nm}$ and $\psi_{i}(t)=\psi_{nm}(t)$, and the  index $i$ is calculated as $i=Mn+m+1$.\\ Thus, we have:
\begin{equation} \label{e28}
C\triangleq\left[c_{1},c_{2},\ldots,c_{\hat{m}}\right]^{T},\nonumber
\end{equation}
\begin{equation}
\label{e9}
\Psi(t)\triangleq\left[\psi_{1}(t),\psi_{2}(t),\ldots,\psi_{\hat{m}}(t)\right]^{T}.
\end{equation}
Similarly, the CWs can be used to expand an arbitrary function of two variables such as $u(x,t)$ which is defined over $[0,1]\times
[0,1]$ as:
\begin{equation}\label{TT1}
u(x,t)\simeq\sum_{i=1}^{\hat{m}}\sum_{j=1}^{\hat{m}}
u_{ij}\psi_i(x)\psi_j(t)=\Psi^T(x)\textbf{U}\Psi(t),
\end{equation}
 where $U=[u_{ij}]$ and
 $u_{ij}=\left(\psi_{i}(x),\left(u(x,t),\psi_{j}(t)\right)\right)$.\\
The derivative of the vector $\Psi(t)$ defined in equation (\ref{e9}) can be expressed as \cite{MM3}:
\begin{equation}\label{C8}
\frac{d\Psi(t)}{dt}=\textbf{D}\Psi(t),
\end{equation}
where $\textbf{D}$ is the $\hat{m}\times \hat{m}$ operational matrix of one-time derivative of the CWs vector and is given by:
\begin{equation}
\textbf{D}=\left( \begin{array}{cccc}
      \textbf{F} & 0 & \ldots & 0 \\
      0 & \textbf{F} & \ldots & 0 \\
      \vdots & \vdots & \ddots & \vdots \\
      0 & 0 & 0 & \textbf{F} \\
    \end{array}
  \right),
\end{equation}
where $\textbf{F}$ is an $M\times M$ matrix with the elements:
\begin{equation}
\textbf{F}_{ij}=\displaystyle \left\{\begin{array}{cl}
\displaystyle 2^{k+2}(i-1)\sqrt{\frac{\sigma_{i-1}}{\sigma_{j-1}}}, & i=2,3,\ldots,M,\,\,\, j=1,2,\ldots,i-1,\,\,\,i+j~ \text{is odd},
\\ \noalign{\medskip}
  0, & o.w,
 \end{array} \right.
\end{equation}
and
\begin{equation}
\displaystyle \sigma_{j}=\left\{\begin{array}{cc}
 2, & j=0, \\ \noalign{\medskip}
 1, & j\geq 1.
 \end{array}\right.\nonumber
\end{equation}
 In general, the operational matrix $\textbf{D}^{r}$ of $r$-times derivative of $\Psi(t)$ can be expressed as:
\begin{equation}\label{C9}
 \frac{d^{r}\Psi(t)}{dt^{r}}=\textbf{D}^{r}\Psi(t),
\end{equation}
where $\textbf{D}^{r}$ is the $r$-th power of matrix $\textbf{D}$.
\section{The operational matrix of variable-order fractional derivative }\label{S3}
The  variable-order fractional derivative of order $(q-1)< \vartheta (x,t)\leq q,\,q\in \mathbb{N}$, of the vector $\Psi(t)$ which is defined in equation (\ref{e9}) can be expressed as:
\begin{equation}\label{O1}
\prescript{c}{0}{D_{t}^{\vartheta(x,t)}}\Psi(t)\simeq \textbf{Q}_{t}^{\vartheta(x,t)}\Psi(t),
\end{equation}
where $Q^{\vartheta(x,t)}$ is called the $\hat{m}\times\hat{m}$ operational matrix of variable-order fractional derivative of order $\vartheta(x,t)$ for the CWs.\\
 In the sequel, we will derive an  explicit form for this matrix. To this end, we  introduce another family of piecewise functions, which are defined on $[0,1]$ as:
\begin{equation}\label{phi}
\phi_{nm}(t)=\displaystyle\left\{\begin{array}{ccc}
\displaystyle t^{m}, & &\displaystyle t\in\left[\frac{n}{2^{k}},\frac{n+1}{2^{k}}\right], \\ \noalign{\medskip}
0, && o.w,
\end{array}\right.
\end{equation}
for $n=0,1,\ldots,2^{k}-1$, $m=0,1,\ldots,M-1$.\\
Unlike the CWs, this family of functions is not normalized. An $\hat{m}$-set of these  functions may be expressed as:
\begin{equation}
\Phi(t)\triangleq [\phi_{1}(t),\phi_{2}(t),\ldots,\phi_{\hat{m}}(t)]^{T},
\end{equation}
where $\phi_{i}(t)=\phi_{nm}(t)$, and the  index $i$ is determined by the relation $i=Mn+m+1$.\\
The following relation  holds among these functions and the CWs:
\begin{equation}\label{co}
\Phi(t)=\textbf{P}\Psi(t),
\end{equation}
where $p_{ij}=(\phi_{i},\psi_{j})$.
\begin{lm}\label{lem1}
Let $\phi_{nm}(t)$ be as defined in equation (\ref{phi}), and $(q-1)<\vartheta(x,t)\leq q$ be a positive function defined over $[0,1]$. Then, we have:
\begin{equation}
\prescript{c}{0}{D_{t}^{\vartheta(x,t)}}\phi_{nm}(t)=\displaystyle\left\{\begin{array}{ccl}
\displaystyle \frac{m!}{\Gamma(m-\vartheta(x,t)+1)}\, t^{m-\vartheta(x,t)}, & m=q,q+1,\ldots,M-1,
              &\displaystyle t\in\left[\frac{n}{2^{k}},\frac{n+1}{2^{k}}\right], \\  \noalign{\medskip}
0, && o.w.
\end{array}\right.\nonumber
\end{equation}
\end{lm}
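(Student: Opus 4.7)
My plan is to prove the lemma by reducing to a direct application of formula (\ref{de3}) on the support interval $I_n := [n/2^k,(n+1)/2^k]$, together with the observation that $\phi_{nm}$ vanishes on the complement of $I_n$.

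First I would fix $n$ and $m$ and note that on $I_n$ the piecewise definition (\ref{phi}) gives $\phi_{nm}(t)=t^m$ with $m\in\mathbb{N}$. Hence, for $t\in I_n$, evaluating $\prescript{c}{0}{D_t^{\vartheta(x,t)}}\phi_{nm}(t)$ amounts to evaluating the variable-order Caputo derivative of the monomial $s\mapsto s^m$. If $m\geq q$, which covers the range $m=q,q+1,\ldots,M-1$ (since $m\leq M-1$ by construction of $\phi_{nm}$), property (\ref{de3}) gives
\begin{equation*}
\prescript{c}{0}{D_t^{\vartheta(x,t)}}t^m=\frac{\Gamma(m+1)}{\Gamma(m-\vartheta(x,t)+1)}\,t^{m-\vartheta(x,t)}=\frac{m!}{\Gamma(m-\vartheta(x,t)+1)}\,t^{m-\vartheta(x,t)},
\end{equation*}
which is exactly the nontrivial branch of the lemma. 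For $m<q$, the second branch of (\ref{de3}) forces the derivative to vanish on $I_n$, consistent with the $o.w.$ case.

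For $t\notin I_n$, the function $\phi_{nm}$ is identically zero, and by the piecewise convention that the variable-order Caputo operator is applied subinterval by subinterval of the uniform partition, its derivative is 0 off the support of $\phi_{nm}$. Combining the two cases yields the piecewise expression claimed in the lemma.

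The main subtle point I expect is the mismatch between the non-local definition (\ref{de1}) of the Caputo derivative and the local, piecewise way the operator is being used on $\phi_{nm}$, which has jumps at the endpoints of $I_n$. I would address this up front by stating the convention used throughout the paper, namely that for piecewise functions of the form (\ref{phi}) the operator acts independently on each subinterval $I_n$. Once this convention is fixed, jump contributions at $\partial I_n$ do not enter the computation and the whole proof collapses to a single invocation of (\ref{de3}) on $I_n$.
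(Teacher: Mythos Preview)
Your proposal is correct and follows essentially the same approach as the paper, which simply states that the result is straightforward from relation (\ref{de3}). You have in fact been more careful than the paper by explicitly flagging and resolving the tension between the non-local definition (\ref{de1}) and the piecewise application on $I_n$; the paper silently adopts that convention without comment.
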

\begin{proof}
By considering  relation (\ref{de3}), the proof will be  straightforward.
\end{proof}
\begin{thm}\label{th1}
Let $\Phi(t)$ be the  piecewise functions vector  defined as in equation (\ref{phi}) and $(q-1)<\vartheta(x,t)\leq q$ be a positive function defined over $[0,1]$. The variable-order fractional derivative of order $\vartheta(x,t)$ in the Caputo sense of  $\Phi(t)$ can be expressed by:
\begin{equation}
\prescript{c}{0}{D_{t}^{\vartheta(x,t)}}\Phi(t)=\textbf{T}_{t}^{\vartheta(x,t)}\Phi(t),\nonumber
\end{equation}
where $\textbf{T}_{t}^{\vartheta(x,t)}$ is an $\hat{m}\times \hat{m}$ matrix given by:
\begin{equation}\label{pe}
\textbf{T}_{t}^{\vartheta(x,t)}=\left(
\begin{array}{ccccc}
\displaystyle \textbf{G}_{t}^{\vartheta(x,t)} & 0 & 0 & \ldots & 0 \\ \noalign{\medskip}
0 & \displaystyle \textbf{G}_{t}^{\vartheta(x,t)} & 0 & \ldots & 0 \\ \noalign{\medskip}
 0 & 0 & \textbf{G}_{t}^{\vartheta(x,t)} & \ldots & 0 \\ \noalign{\medskip}
  \vdots & \vdots & \vdots & \ddots & \vdots \\ \noalign{\medskip}
   0 & 0 & 0 & 0 & \displaystyle \textbf{G}_{t}^{\vartheta(x,t)} \\
    \end{array}
     \right),
\end{equation}
and $\textbf{G}_{t}^{\vartheta(x,t)}$ is an $M\times M$ matrix given as:
\begin{equation}
\resizebox{.85\textwidth}{!}{\[
\textbf{G}_{t}^{\vartheta(x,t)}=t^{-\vartheta(x,t)}\left(
\begin{array}{cccccccc}
0 &0&  \ldots & 0 & 0 & 0 & \ldots & 0 \\
\vdots &\vdots&  \vdots & \vdots & \vdots & \vdots & \vdots & \vdots\\
 0 &0&  0 & 0 & 0 & 0 & 0 & 0\\
 0 &\ldots&  0 & \displaystyle \frac{q!}{\Gamma(q-\vartheta(x,t)+1)}& 0 & 0 & \ldots & 0 \\ \noalign{\medskip}
 0 & \ldots& \ldots & 0 & \displaystyle \frac{(q+1)!}{\Gamma(q-\vartheta(x,t)+2)} & 0 & 0 & 0 \\ \noalign{\medskip}
 \vdots& \vdots & \vdots & \vdots & \vdots & \ddots &0 & 0 \\ \noalign{\medskip}
 \vdots & \vdots& \vdots & \vdots & \vdots & \vdots &\displaystyle \frac{(M-2)!}{\Gamma(M-\vartheta(x,t)-1)} & 0 \\ \noalign{\medskip}
  0 & 0&\ldots & 0 & 0 & 0 & 0 &\displaystyle \frac{(M-1)!}{\Gamma(M-\vartheta(x,t))}
  \end{array}
   \right). \]}\nonumber
\end{equation}
\end{thm}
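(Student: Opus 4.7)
The plan is to apply Lemma \ref{lem1} componentwise to the piecewise vector $\Phi(t)$ and then recognize the resulting expression as a matrix--vector product with the block structure claimed in (\ref{pe}).

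First I would fix a pair $(n,m)$ with $0 \le n \le 2^{k}-1$ and $0 \le m \le M-1$ and examine $\prescript{c}{0}{D_{t}^{\vartheta(x,t)}}\phi_{nm}(t)$. By Lemma \ref{lem1}, this function vanishes outside $[n/2^{k},(n+1)/2^{k}]$; it vanishes identically when $m<q$; and for $m\ge q$ it equals $\frac{m!}{\Gamma(m-\vartheta(x,t)+1)}\,t^{m-\vartheta(x,t)}$ on that subinterval. The key algebraic observation is the factorisation $t^{m-\vartheta(x,t)} = t^{-\vartheta(x,t)}\cdot t^{m}$, together with the fact that $t^{m}=\phi_{nm}(t)$ on $[n/2^{k},(n+1)/2^{k}]$. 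Combining these gives
\begin{equation*}
\prescript{c}{0}{D_{t}^{\vartheta(x,t)}}\phi_{nm}(t) \;=\; t^{-\vartheta(x,t)}\cdot\frac{m!}{\Gamma(m-\vartheta(x,t)+1)}\,\phi_{nm}(t)
\end{equation*}
for $m\ge q$, and $0$ otherwise. In particular, the variable-order derivative of $\phi_{nm}$ is a scalar multiple of $\phi_{nm}$ itself; no other basis function appears on the right-hand side.

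Second, I would assemble these pointwise identities into the claimed matrix form. Because the derivative of $\phi_{nm}$ produces only $\phi_{nm}$ again, there is no coupling between distinct values of $n$, nor between distinct values of $m$, so in the combined index $i=Mn+m+1$ the $\hat m\times\hat m$ representation is diagonal. Reordering by $n$ into $2^{k}$ consecutive blocks of size $M\times M$, each block is identical and equal to $\mathbf{G}_{t}^{\vartheta(x,t)}$: its first $q$ diagonal entries are zero, reflecting the $m<q$ case of Lemma \ref{lem1}, while the entries at positions $(m+1,m+1)$ for $m=q,q+1,\ldots,M-1$ are $\frac{m!}{\Gamma(m-\vartheta(x,t)+1)}$, with the common scalar $t^{-\vartheta(x,t)}$ factored to the front of the block. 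The off-diagonal blocks vanish because the supports of $\phi_{nm}$ and $\phi_{n'm'}$ are disjoint whenever $n\ne n'$, precluding any contribution between distinct blocks.

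I do not foresee any significant obstacle: once Lemma \ref{lem1} is granted, the theorem is a bookkeeping statement that repackages the pointwise formula into the block-diagonal matrix $\mathbf{T}_{t}^{\vartheta(x,t)}$. The only mild subtlety worth flagging is that $\mathbf{G}_{t}^{\vartheta(x,t)}$ depends on $t$ through the prefactor $t^{-\vartheta(x,t)}$, so the identity $\prescript{c}{0}{D_{t}^{\vartheta(x,t)}}\Phi(t)=\mathbf{T}_{t}^{\vartheta(x,t)}\Phi(t)$ must be read as a pointwise-in-$t$ equality of vector-valued functions rather than as a relation inside a fixed finite-dimensional subspace.
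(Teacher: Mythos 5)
Your proposal is correct and follows the same route as the paper, which simply derives the theorem as an immediate consequence of Lemma \ref{lem1}; your write-up just makes explicit the factorisation $t^{m-\vartheta(x,t)}=t^{-\vartheta(x,t)}t^{m}$ and the disjoint-support bookkeeping that the paper leaves implicit.
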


\begin{proof}
It is an immediate consequence of Lemma \ref{lem1}.
\end{proof}
\noindent To illustrate the calculation procedure, we choose $(M=5,\,k=1)$ and $1<\vartheta(x,t)\leq 2$. Thus, we have:
\begin{equation}
\textbf{G}_{t}^{\vartheta(x,t)}=t^{-\vartheta(x,t)}\left(
\begin{array}{ccccc}
 0 & 0 & 0 & 0 & 0 \\
 0 & 0 & 0 & 0 & 0 \\
 0 & 0 & \displaystyle \frac{2}{\Gamma(3-\vartheta(x,t))} & 0 & 0 \\ \noalign{\medskip}
  0 & 0 & 0 & \displaystyle \frac{6}{\Gamma(4-\vartheta(x,t))} & 0 \\ \noalign{\medskip}
    0 & 0 & 0 & 0 & \displaystyle \frac{24}{\Gamma(5-\vartheta(x,t))} \\
    \end{array}
     \right).\nonumber
\end{equation}
\begin{thm}
Let $\Psi(t)$ be the  CWs vector  defined in equation (\ref{e9}) and $(q-1)<\vartheta(x,t)\leq q$ be a positive function defined over $[0,1]$. The variable-order fractional derivative of order $\vartheta(x,t)$ in the Caputo sense of  $\Psi(t)$ can be expressed as:
\begin{equation}\label{o2}
\prescript{c}{0}{D_{t}^{\vartheta(x,t)}}\Psi(t)=\textbf{Q}_{t}^{\vartheta(x,t)}\Psi(t)=\left(\textbf{P}^{-1}\textbf{T}_{t}^{\vartheta(x,t)}\textbf{P}\right)\Psi(t),
\end{equation}
where $\textbf{P}$ is the  coefficients matrix defined in equation (\ref{co}),  $\textbf{T}_{t}^{\vartheta(x,t)}$ is the operational matrix of variable-order fractional derivative of order $\vartheta(x,t)$ for the piecewise functions, which is defined in equation (\ref{pe}) and $\textbf{Q}_{t}^{\vartheta(x,t)}$ is called the operational matrix of variable-order fractional derivative of order $\vartheta(x,t)$ for the CWs.
\end{thm}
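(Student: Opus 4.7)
The proof plan is essentially a three-step algebraic manipulation, combining the conversion formula between the two bases with Theorem~\ref{th1}.

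First I would invoke relation (\ref{co}), $\Phi(t)=\textbf{P}\Psi(t)$, and invert it to obtain $\Psi(t)=\textbf{P}^{-1}\Phi(t)$. (Invertibility of $\textbf{P}$ follows from the fact that both $\{\psi_i\}$ and the restrictions $\{\phi_i\}$ span the same $\hat{m}$-dimensional space of piecewise polynomials of degree less than $M$ on the partition $\{[n/2^k,(n+1)/2^k]\}$, so $\textbf{P}$ is a change-of-basis matrix between two bases of the same finite-dimensional space.) This is the only nontrivial ingredient that I need to flag; one could cite that $\textbf{P}_{ij}=\langle \phi_i,\psi_j\rangle$ is the Gram-type matrix of the two bases against each other, which is square and nonsingular.

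Next, I would apply the Caputo variable-order operator $\prescript{c}{0}{D_t^{\vartheta(x,t)}}$ componentwise to $\Psi(t)=\textbf{P}^{-1}\Phi(t)$. Since $\textbf{P}^{-1}$ is a constant (in $t$) matrix and the operator is linear in the $t$-argument of its operand (the $x$-dependence only enters through the order $\vartheta(x,t)$, which is treated as a parameter during the $t$-integration in (\ref{de1})), the matrix passes through the operator:
\begin{equation}
\prescript{c}{0}{D_t^{\vartheta(x,t)}}\Psi(t)=\textbf{P}^{-1}\,\prescript{c}{0}{D_t^{\vartheta(x,t)}}\Phi(t).\nonumber
\end{equation}

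Then I would apply Theorem~\ref{th1} to rewrite the right-hand side as $\textbf{P}^{-1}\textbf{T}_t^{\vartheta(x,t)}\Phi(t)$, and finally substitute $\Phi(t)=\textbf{P}\Psi(t)$ once more to land on $\textbf{P}^{-1}\textbf{T}_t^{\vartheta(x,t)}\textbf{P}\,\Psi(t)$, which by definition is $\textbf{Q}_t^{\vartheta(x,t)}\Psi(t)$, giving the claimed identity (\ref{o2}). I expect no real obstacle here beyond the invertibility remark on $\textbf{P}$ and the observation that constant matrices commute with the Caputo operator; the rest is bookkeeping.
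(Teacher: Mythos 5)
Your proposal is correct and follows essentially the same route as the paper's proof: invert $\textbf{P}$ to write $\Psi(t)=\textbf{P}^{-1}\Phi(t)$, push the constant matrix through the Caputo operator, apply Theorem \ref{th1}, and substitute $\Phi(t)=\textbf{P}\Psi(t)$ back. Your added remarks on the invertibility of $\textbf{P}$ and on linearity of the operator are sensible details that the paper leaves implicit.
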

\begin{proof}
By considering equation (\ref{co}) and Theorem \ref{th1}, we have:
\begin{equation}
\Psi(t)=\textbf{P}^{-1}\Phi(t),\nonumber
\end{equation}
and then
\begin{equation}
\prescript{c}{0}{D_{t}^{\vartheta(x,t)}}\Psi(t)=\textbf{P}^{-1}\prescript{c}{0}{D_{t}^{\vartheta(x,t)}}\Phi(t)=
\textbf{P}^{-1}\textbf{T}_{t}^{\vartheta(x,t)}\Phi(t)=\left(\textbf{P}^{-1}\textbf{T}_{t}^{\vartheta(x,t)}\textbf{P}\right)\Psi(t),\nonumber
\end{equation}
which completes the proof.
\end{proof}
\noindent To illustrate the calculation procedure, we choose $(M=3,\,k=1)$ and $0<\vartheta(x,t)\leq 1$. Thus, we have:
\begin{equation}
\textbf{Q}_{t}^{\vartheta(x,t)}=t^{-\vartheta(x,t)}
\left(
  \begin{array}{cc}
    \textbf{A} & 0  \\
    0&\textbf{B}
  \end{array}
 \right), \nonumber
\end{equation}
where
\begin{equation}
\textbf{A}=\left(
  \begin{array}{ccc}
    0 & 0 & 0 \\
  \displaystyle \frac{\sqrt{2}}{\Gamma(2-\vartheta(x,t))} &\displaystyle \frac{1}{\Gamma(2-\vartheta(x,t))}& 0\\ \noalign{\medskip}
   \displaystyle  \frac{-4\sqrt{2}}{\Gamma(2-\vartheta(x,t))}+\frac{6\sqrt{2}}{\Gamma(3-\vartheta(x,t))} & \displaystyle \frac{-4}{\Gamma(2-\vartheta(x,t))}+\frac{8}{\Gamma(3-\vartheta(x,t))} & \displaystyle \frac{2}{\Gamma(3-\vartheta(x,t))}
   \end{array}
 \right),\nonumber
\end{equation}
and
\begin{equation}
\textbf{B}=\left(
  \begin{array}{ccc}
    0 & 0 & 0 \\
     \displaystyle  \displaystyle \frac{3\sqrt{2}}{\Gamma(2-\vartheta(x,t))} & \displaystyle \frac{1}{\Gamma(2-\vartheta(x,t))}  & 0 \\ \noalign{\medskip}
    \displaystyle\displaystyle \frac{-36\sqrt{2}}{\Gamma(2-\vartheta(x,t))}+\frac{38\sqrt{2}}{\Gamma(3-\vartheta(x,t))} & \displaystyle \frac{-12}{\Gamma(2-\vartheta(x,t))}+\frac{24}{\Gamma(3-\vartheta(x,t))} & \displaystyle\frac{2}{\Gamma(3-\vartheta(x,t))}
    \end{array}
 \right).\nonumber
\end{equation}

\section{Description of the proposed method}\label{S4}
In this section,  the CWs expansion and their operational matrix of variable-order fractional derivative   are used together  to solve  the  variable-order time fractional  mobile-immobile advection-dispersion model of equation (\ref{D1}). To solve this equation,  we approximate the unknown function  $u(x,t)$ by the  CWs as:
\begin{equation}
\label{D4}
u(x,t) \simeq \Psi(x)^{T}\textbf{U}\Psi(t),
\end{equation}
where $\textbf{U}=[u_{ij}]$ is an $\hat{m}\times \hat{m}$ matrix which we need to compute it, and $\Psi(.)$ is the CWs vector, which is defined in equation (\ref{e9}).\\
By derivatives of equation (\ref{D4}) for one time with respect to $t$ and two times with respect to $x$,  and using equations (\ref{C8}) and (\ref{C9}), we obtain:
\begin{equation}\label{D5}
\begin{array}{lll}
  u_{t}(x,t)\simeq  \Psi(x)^{T}\textbf{U}\textbf{D}\Psi(t), & u_{x}(x,t)\simeq  \Psi(x)^{T}\textbf{D}^{T}\textbf{U}\Psi(t), & u_{xx}(x,t)\simeq  \Psi(x)^{T}(\textbf{D}^{2})^{T}\textbf{U}\Psi(t).
\end{array}
\end{equation}
By the variable-order fractional derivative of order $\gamma(x,t)$ of equation (\ref{D4})  with respect to $t$,  and considering equation (\ref{o2}), we have:
\begin{equation}\label{D6}
 \prescript{c}{0}{D_{t}^{\gamma(x,t)}}u(x,t)\simeq  \Psi(x)^{T}\textbf{U}\textbf{Q}^{\gamma(x,t)}\Psi(t).
\end{equation}
Applying equation (\ref{D4})  into the initial-boundary conditions expressed in equation (\ref{D2}), and using equation (\ref{C8}), we have:
\begin{equation}\label{D9}
\begin{array}{lll}
 \Lambda_{1}(x)=\Psi(x)^{T}\textbf{U}\Psi(0)-g(x)\simeq0,& \Lambda_{2}(t)=\Psi(0)^{T}\textbf{U}\Psi(t)-h_{1}(t)\simeq0, &\Lambda_{1}(t)=\Psi(1)^{T}\textbf{U}\Psi(t)-h_{2}(t)\simeq0.
  \end{array}
\end{equation}
By substituting equations (\ref{D5}) and (\ref{D6}) into the variable-order time  fractional mobile-immobile advection-dispersion model in equation (\ref{D1}), we get:
\begin{equation}\label{D8}
 \Psi(x)^{T}\left(\alpha_{1}\textbf{U}\textbf{D}+\alpha_{2}\textbf{U}\textbf{Q}^{\alpha(x,t)}+\mu_{1}\textbf{D}^{T}\textbf{U}-\mu_{2}(\textbf{D}^{2})^{T}\textbf{U}\right)\Psi(t)-f(x,t)\triangleq F(x,t)\simeq 0.
\end{equation}
In order to obtain an approximate solution for the  problem at hand, we need to find the unknown matrix $\textbf{U}$.
So, we need to construct a linear system of $\hat{m}^{2}$ equations which by solving it, the unknown matrix $\textbf{U}$ is determined. To this end, we choose $\hat{m}^{2}-3\hat{m}+2$ algebraic equations using equation (\ref{D8}) as:
\begin{equation}\label{T11}
F(x_{i},t_{j})=0,\hspace{0.5cm}i=2,3,\ldots,\hat{m}-1,\,\,j=2,3,\ldots,\hat{m},
\end{equation}
where $x_{i}$ and $t_{j}$ are the zeros of the shifted  Chebyshev polynomials of degree $\hat{m}$ on [0,1].\\
Moreover, by taking the collocation points $x_{i}$ and $t_{i}$ into equation (\ref{D9}) as:
\begin{equation}\label{T12}
\begin{array}{ll}
\Lambda_{1}(x_{i})=0,  &i=1,2,\ldots,\hat{m}, \\ \noalign{\medskip}
\Lambda_{2}(t_{i})=0,  &i=2,3,\ldots,\hat{m},\\ \noalign{\medskip}
\Lambda_{3}(t_{i})=0,  &i=2,3,\ldots,\hat{m},\\ \noalign{\medskip}
\end{array}
\end{equation}
we get $3\hat{m}-2$ linear algebraic equations.\\
Combining equations (\ref{T11}) and (\ref{T12}) gives a linear system of $\hat{m}^{2}$ algebraic equations, which can be solved for the unknown matrix $\textbf{U}=[u_{ij}],\,i,j=1,2,\ldots,\hat{m}$, using MAPLE or MATLAB software packages. By determining $\textbf{U}$, we can determine the approximate solutions for $u(x,t)$  from equation (\ref{D4}).
\section{Illustrative test problems}\label{S5}
In this section, we provide some numerical examples to demonstrate  the efficiency and reliability  of our method.
It is worth mentioning that all the numeric computations are performed by MAPLE 15  with 50 decimal digits.
\begin{example}\label{ex1}
Consider the variable-order time fractional  problem \cite{H01}:
\begin{equation}
\alpha_{1}\frac{\partial u(x,t)}{\partial t}+\alpha_{2}\,\prescript{c}{0}{D_{t}^{\gamma(x,t)}}u(x,t)=-\mu_{1}\frac{\partial u(x,t)}{\partial x}+\mu_{2}\frac{\partial^{2} u(x,t)}{\partial x^{2}}+f(x,t),\nonumber
\end{equation}
 subject to the  following initial-boundary conditions:
\begin{equation}
\begin{array}{lll}
  u(x,0)=10x^{2}(1-x)^{2}, & u(0,t)=0, & u(1,t)=0,
  \end{array}\nonumber
\end{equation}
where
\begin{equation}
f(x,t)=10 \,\left(\alpha_{1}+\alpha_{2}\frac{t^{1-\gamma(x,t)}}{\Gamma(2-\gamma(x,t))}\right)x^{2}\left(1-x\right)^{2}+10\,\left(\mu_{1} \left(4 x^{3}-6 x^{2}+2 x\right)-\mu_{2}\left(12 x^{2}-12x+2 \right)\right)(t+1).\nonumber
\end{equation}
The analytical solution for this problem is $u(x,t)=10\,(t+1)x^{2}(1-x)^{2}$.
The numerical solution for this problem is also computed by our method for $\displaystyle \gamma(x,t)=1-0.5e^{-(xt)}$, $\alpha_{1}=\alpha_{2}=\mu_{1}=\mu_{2}=1$  and $(k=1,M=5)$. The numerical behavior of the approximate solution  and absolute error  are shown in Fig. \ref{Fg1}. From Fig. \ref{Fg1} it can be seen that the proposed method  is very efficient and accurate for solution of this problem. It is also worth noting that in \cite{H1}, the authors have proposed a discrete implicit numerical method for solving this problem. By considering Fig. \ref{Fg1} and Tables 1 and 2 in \cite{H1}, one can simply see that  the approximate solution obtained by  the method of this paper is more accurate the one in \cite{H1}. Moreover, the implementation of  our proposed method is much  simple in comparison with the  one in \cite{H1}.
\end{example}
\begin{example}\label{ex2}
Consider the following variable-order time fractional  problem:
\begin{equation}
\alpha_{1}\frac{\partial u(x,t)}{\partial t}+\alpha_{2}\,\prescript{c}{0}{D_{t}^{\gamma(x,t)}}u(x,t)=-\mu_{1}\frac{\partial u(x,t)}{\partial x}+\mu_{2}\frac{\partial^{2} u(x,t)}{\partial x^{2}}+f(x,t),\nonumber
\end{equation}
with  homogenous initial-boundary conditions and
\begin{equation}
f(x,t)=10\,\left(\alpha_{1}\left(3t^{2}-4t^{3}\right)+\alpha_{2}\left(\frac{6\,t^{3-\gamma(x,t)}}{\Gamma(4-\gamma(x,t))}-\frac{24\,t^{4-\gamma(x,t)}}{\Gamma(5-\gamma(x,t))}\right)\right)x^{3}\left(1-x\right)\nonumber
\end{equation}
\begin{equation}
\hspace{1.2cm}+10\,\left(\mu_{1} \left(3x^{2}-4x^3\right)-\mu_{2}\left(6x-12x^{2}\right)\right)t^{3}(1-t).\nonumber
\end{equation}
Its analytical solution is $u(x,t)=10\,x^{3}t^{3}(1-x)(1-t)$.
It is also solved numerically by our method for $\displaystyle \gamma(x,t)=1-0.4\sin(x+t)^{2}$, $\alpha_{1}=\alpha_{2}=1,\,\mu_{1}=\mu_{2}=2$  and $(k=1,M=5)$. The numerical behavior of the approximate solution  and absolute error  are shown in Fig. \ref{Fg2}. From Fig. \ref{Fg2}, it can be seen that our method  is very efficient and accurate for solving this problem.
\end{example}
\begin{example}\label{ex3}
Consider the following variable-order time fractional  problem:
\begin{equation}
\alpha_{1}\frac{\partial u(x,t)}{\partial t}+\alpha_{2}\,\prescript{c}{0}{D_{t}^{\gamma(x,t)}}u(x,t)=-\mu_{1}\frac{\partial u(x,t)}{\partial x}+\mu_{2}\frac{\partial^{2} u(x,t)}{\partial x^{2}}+f(x,t),\nonumber
\end{equation}
 subject to the  following initial-boundary conditions:
\begin{equation}
\begin{array}{lll}
  u(x,0)=0, &   u(0,t)=t^{3}, & u(1,t)=e\,t^{3},
 \end{array}\nonumber
\end{equation}
where
\begin{equation}
f(x,t)=\left(\alpha_{1}\,3t^{2}+\alpha_{2}\,\frac{6t^{3-\gamma(x,t)}}{\Gamma(4-\gamma(x,t))}+\left(\mu_{1}-\mu_{2}\right)t^{3}\right)x\left(1-x\right)e^{x}.\nonumber
\end{equation}
The analytical solution for this problem is $u(x,t)=t^{3}e^{x}$.
It is also solved numerically by our method for $\displaystyle \gamma(x,t)=0.8+0.2e^{-x}\sin(t)$, $\alpha_{1}=1,\,\alpha_{2}=\frac{1}{2},\,\mu_{1}=1,\,\mu_{2}=2$, $k=0$,  and  some different values of $M$.
The  absolute errors of the approximate solution at  $x=0.5$ for some different values of $t$ are shown in Table \ref{T1}.
The numerical behavior of the approximate solution and absolute error for $M=11$ are shown in Fig. \ref{Fg3}.
From Table \ref{T1}, we observe that the proposed method can provide  numerical results with high accuracy in all cases.
  Furthermore, it can be seen that the accuracy of the obtained results is improved by increasing the number of the CWs. From Fig. \ref{Fg3}, it can be seen that our method is very efficient and accurate for solution of this problem.
\end{example}
\begin{example}\label{ex4}
Consider the following variable-order time fractional  problem:
\begin{equation}
\frac{\partial u(x,t)}{\partial t}+\,\prescript{c}{0}{D_{t}^{\gamma(x,t)}}u(x,t)=-\frac{\partial u(x,t)}{\partial x}+\frac{\partial^{2} u(x,t)}{\partial x^{2}}+f(x,t),\nonumber
\end{equation}
 subject to the  following initial-boundary conditions:
\begin{equation}
\begin{array}{lll}
  u(x,0)=0, &  u(0,t)=t^{3}, & u(1,t)=t^{3},
 \end{array}\nonumber
\end{equation}
where
\begin{equation}
f(x,t)=\left( 3\,{t}^{2}+6\,{\frac {{t}^{3-\alpha \left( x,t \right) }}{
\Gamma  \left( 4-\alpha \left( x,t \right)  \right) }} \right)
 \left(  \left| 2\,x-1 \right|  \right) ^{3}+ \left( 6\, \left| 2\,x-1
 \right|  \left( 2\,x-1 \right) -24\, \left| 2\,x-1 \right|  \right) {
t}^{3}
.\nonumber
\end{equation}
Its analytical solution is $u(x,t)=t^{3}\left(|2x-1|\right)^{3}$.
It is also solved numerically by the our method for $\displaystyle \gamma(x,t)=1-e^{-xt}$  and $(k=1,M=4)$. The numerical behavior of the approximate solution  and absolute error  are shown in Fig. \ref{Fg4}. By Fig. \ref{Fg4}, it can be seen that our method  is very efficient and accurate for solving this problem.
\end{example}
\begin{table}[h!]
\caption{ \small{The  absolute errors of the approximate solution at  $x=0.5$ for different values of $t$ for Example \ref{ex3}.}}
 \centering
 \begin{tabular}{ccccccccc}
  \toprule
   \noalign{\medskip}
    $t$ & $M=4$ & $M=5$ & $M=6$ & $M=7$ & $M=8$&$M=9$&$M=10$&$M=11$ \\
     \toprule
  0.1 & 7.596E-07 & 1.492E-09 & 1.785E-09& 2.336E-12 & 6.160E-13 &5.243E-16&1.089E-15&9.735E-19\\
  0.2 & 9.642E-08 & 1.492E-09 & 1.358E-08& 2.325E-11 & 2.099E-12 &1.240E-14&1.114E-14&1.336E-17\\
  0.3 & 4.509E-07 & 2.284E-08 & 5.224E-08& 9.903E-11 & 8.307E-13 &6.190E-14&4.314E-14&5.765E-17\\
  0.4 & 5.420E-06 & 6.907E-08 & 1.350E-07& 2.687E-10 & 1.491E-11 &1.798E-13&1.114E-13&1.574E-16\\
  0.5 & 1.783E-05 & 1.571E-07 & 2.796E-07& 5.732E-10 & 4.758E-11 &3.989E-13&2.306E-13&3.372E-16\\
  0.6 & 4.070E-05 & 3.010E-07 & 5.041E-07& 1.054E-09 & 1.064E-10 &7.525E-13&4.157E-13&6.219E-16\\
  0.7 & 7.706E-05 & 5.150E-07 & 8.264E-07& 1.754E-09 & 1.994E-10 &1.274E-12&6.816E-13&1.036E-15\\
  0.8 & 1.299E-04 & 8.138E-07 & 1.265E-06& 2.717E-09 & 3.342E-10 &1.998E-12&1.043E-12&1.607E-15\\
  0.9 & 2.022E-04 & 1.212E-06 & 1.838E-06& 3.984E-09 & 5.188E-10 &2.959E-12&1.515E-12&2.359E-15\\
   \toprule
      \end{tabular}
 \label{T1}
\end{table}
\begin{figure}[h!]
\begin{tabular}{ll}
\epsfig{file=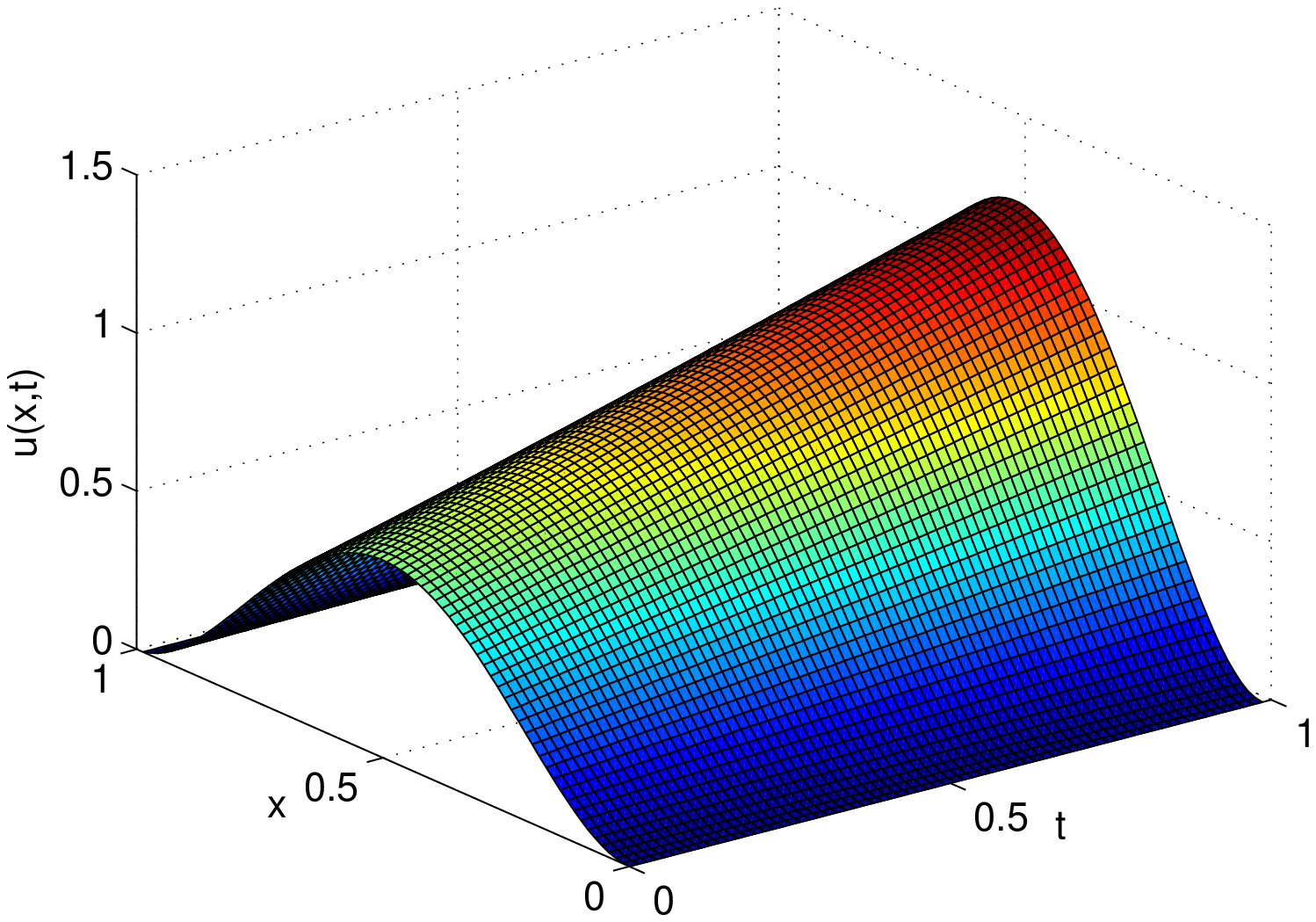,scale=0.48,clip=} &
\epsfig{file=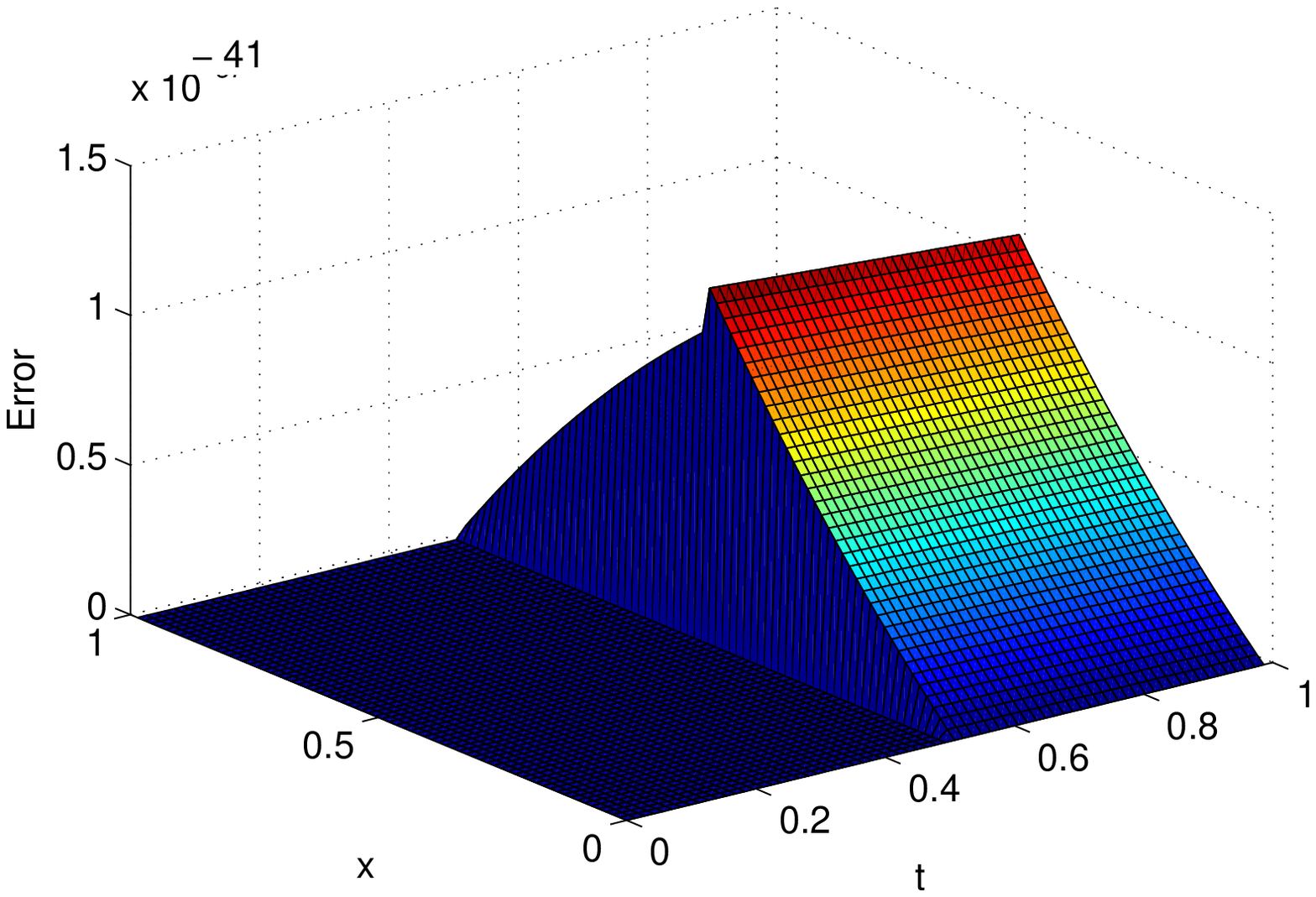,scale=0.48,clip=}
\end{tabular}
 \caption{\small The numerical behavior of the   approximate   solution (left) and absolute error (right) for Example \ref{ex1}.  }
 \label{Fg1}
\end{figure}

\begin{figure}[h!]
\begin{tabular}{ll}
\epsfig{file=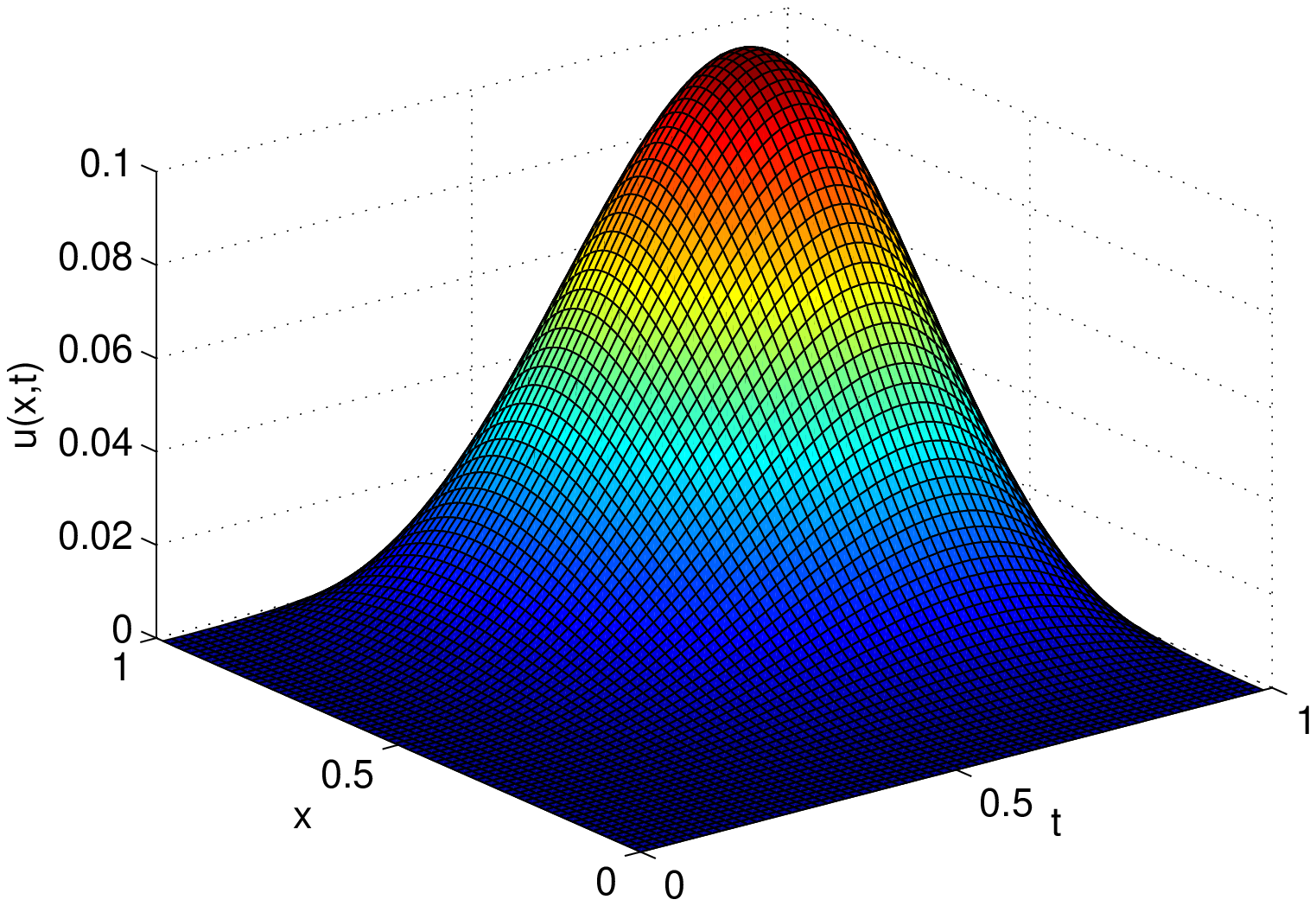,scale=0.48,clip=} &
\epsfig{file=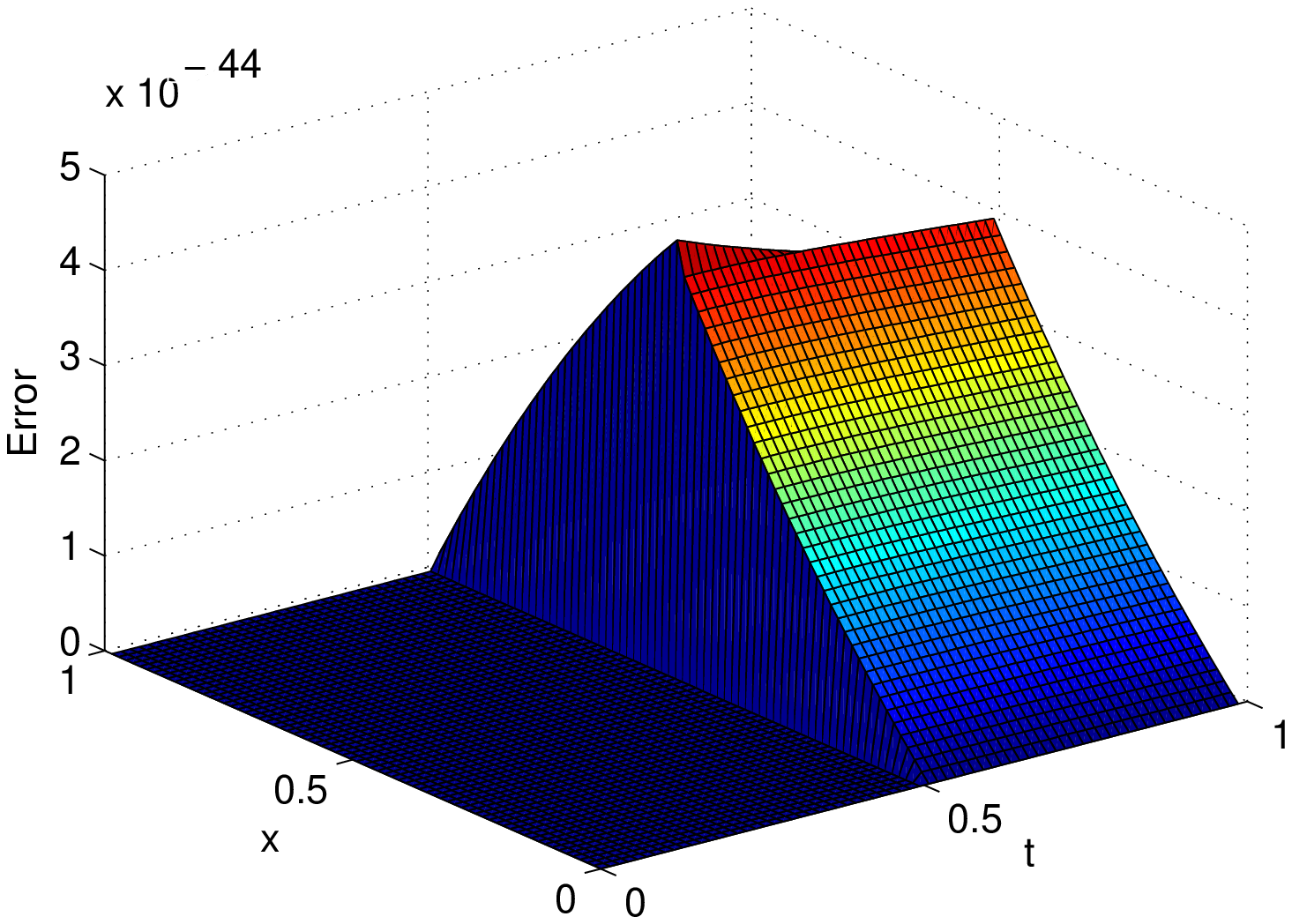,scale=0.48,clip=}
\end{tabular}
 \caption{\small The numerical behavior of the   approximate   solution (left) and absolute error (right) for Example \ref{ex2}.  }
 \label{Fg2}
\end{figure}

\begin{figure}[h!]
\begin{tabular}{ll}
\epsfig{file=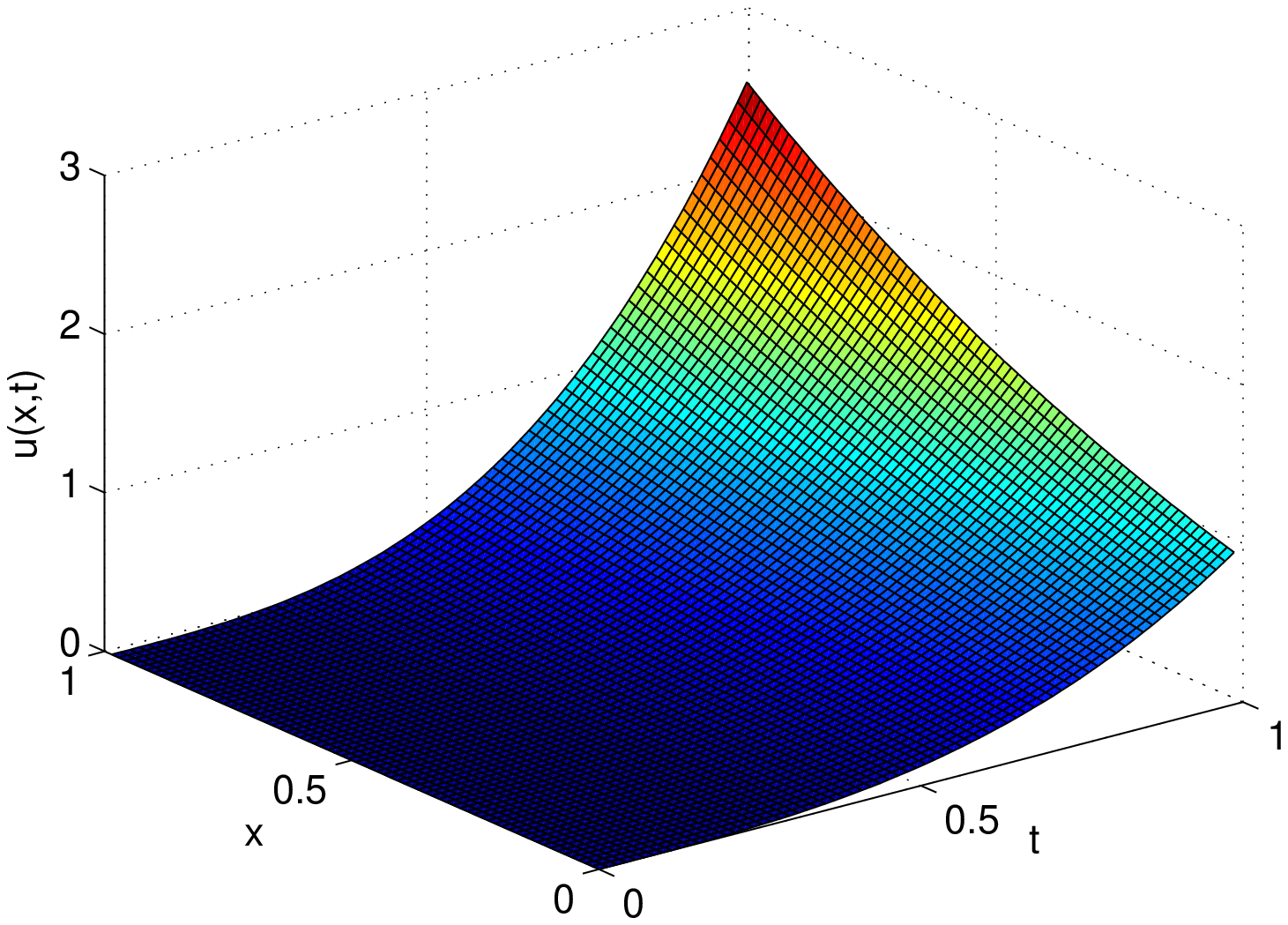,scale=0.48,clip=} &
\epsfig{file=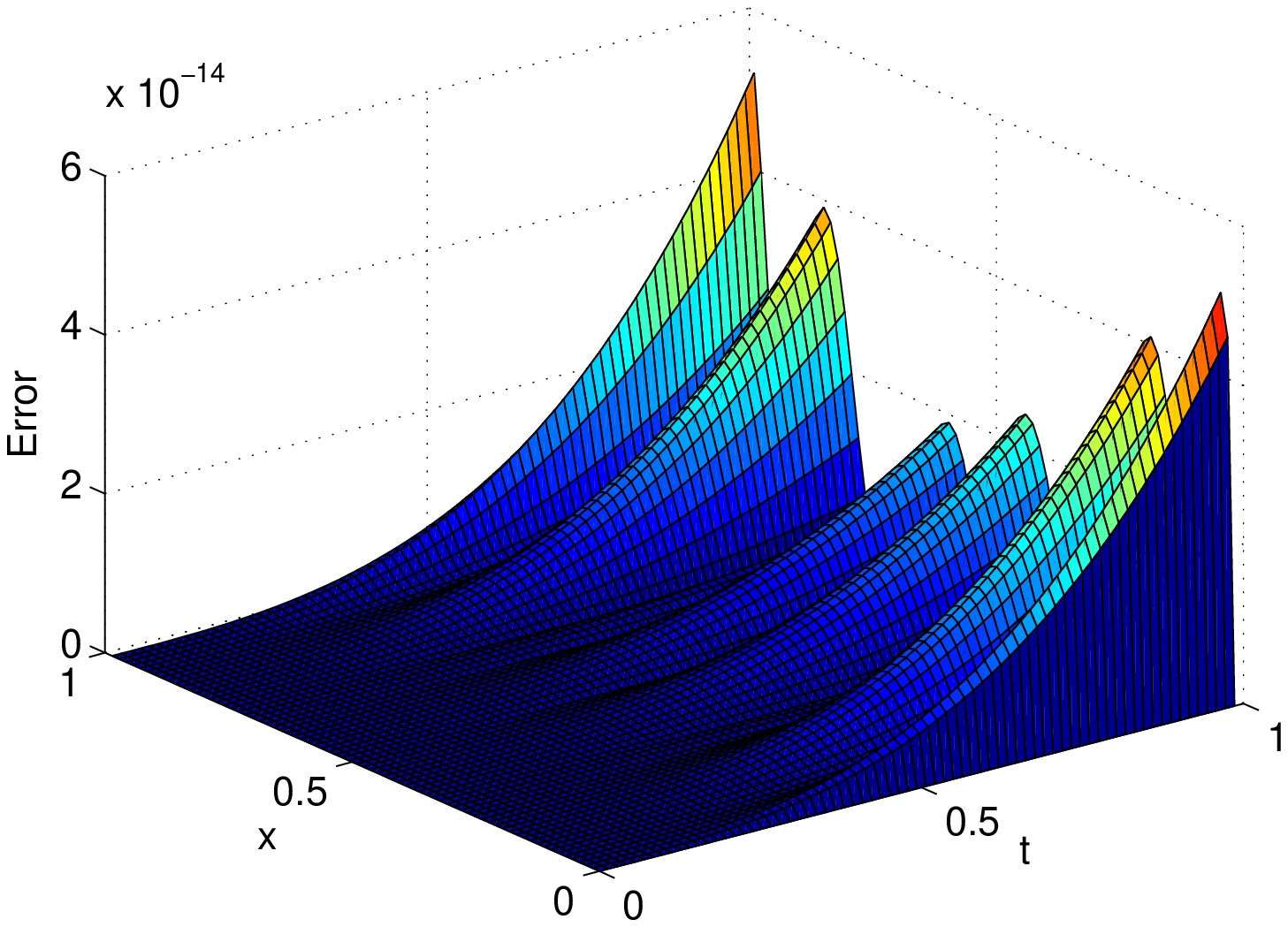,scale=0.48,clip=}
\end{tabular}
 \caption{\small The numerical behavior of the   approximate   solution (left) and absolute error (right) for Example \ref{ex3}.  }
 \label{Fg3}
\end{figure}

\begin{figure}[h!]
\begin{tabular}{ll}
\epsfig{file=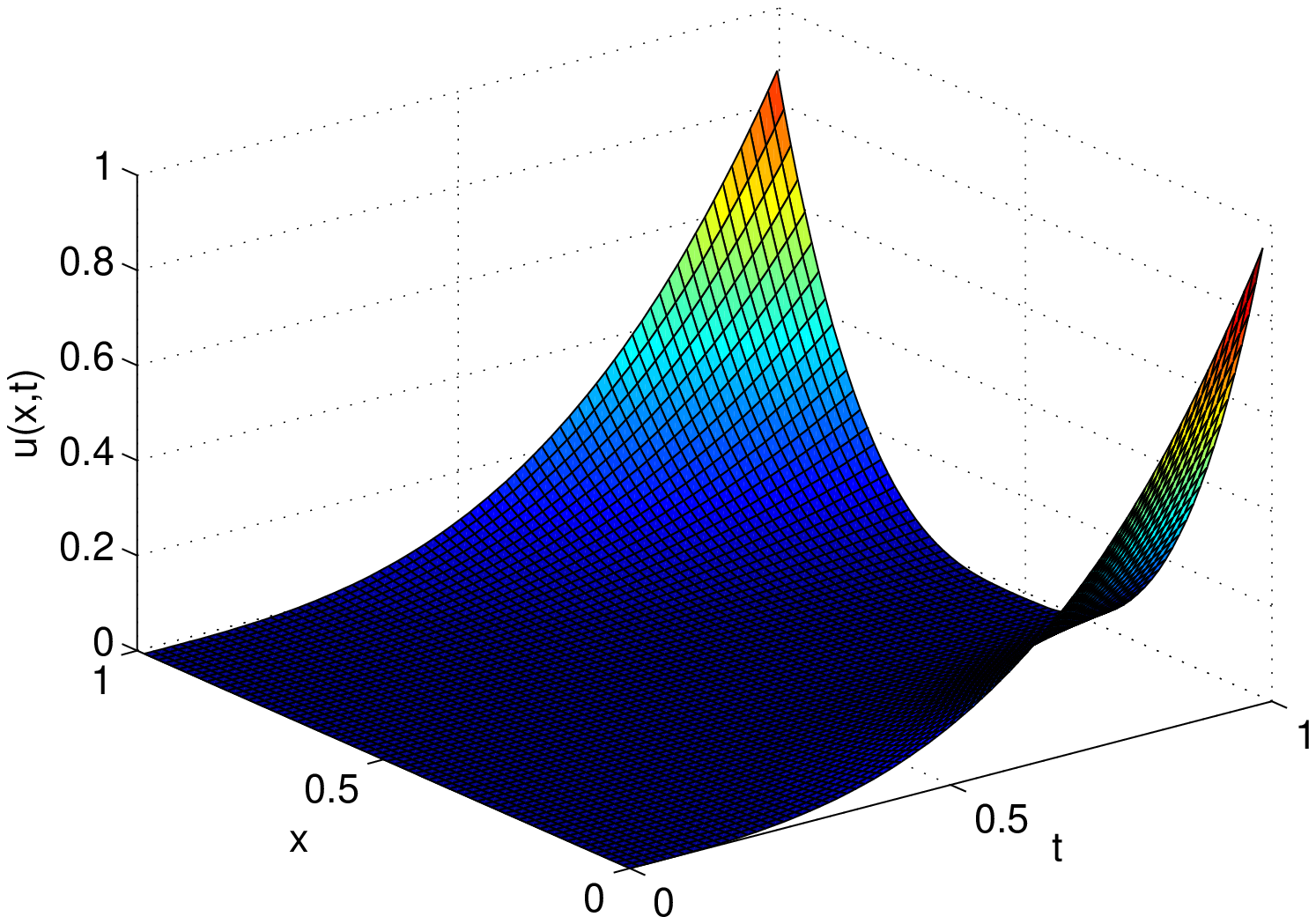,scale=0.48,clip=} &
\epsfig{file=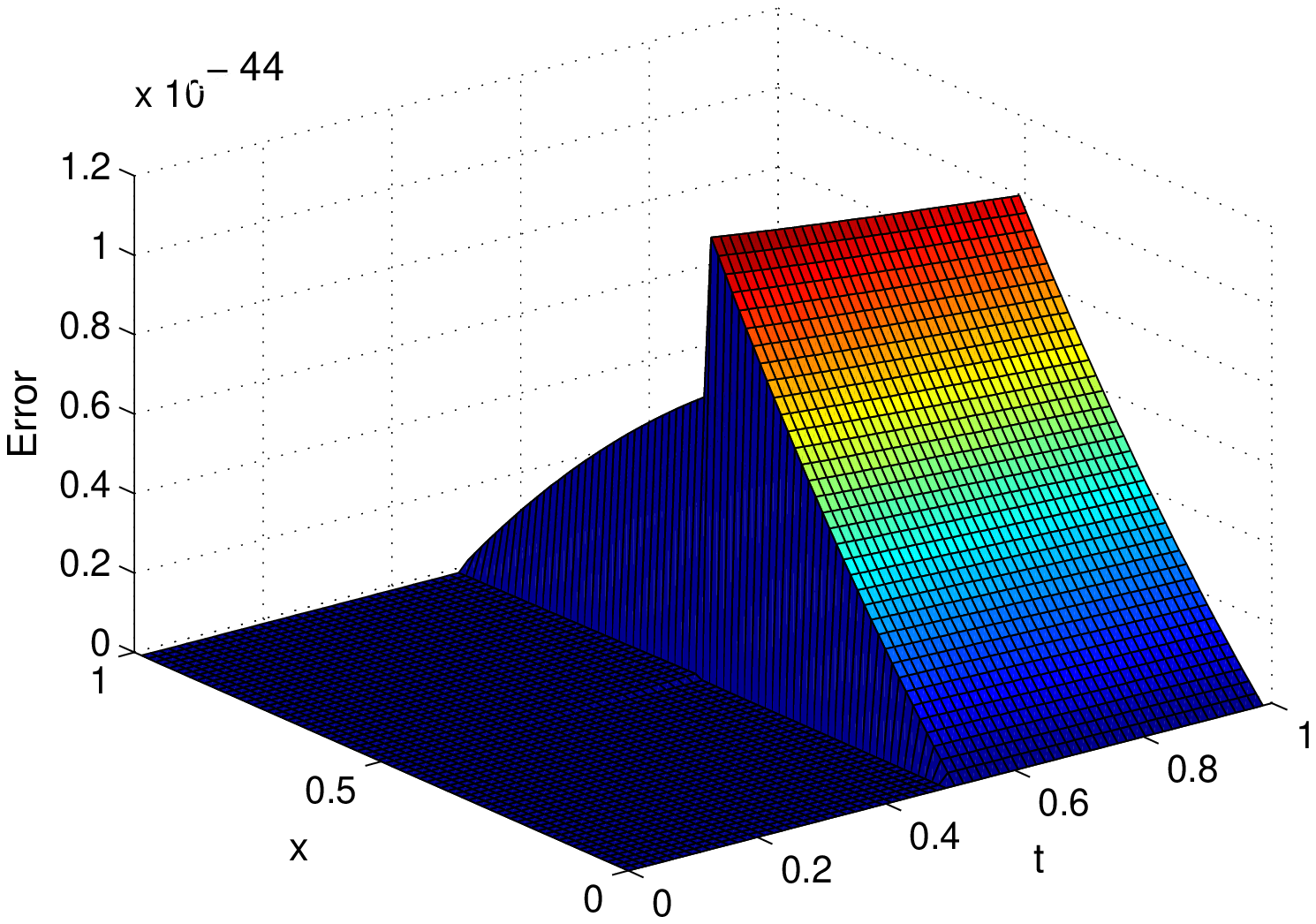,scale=0.48,clip=}
\end{tabular}
 \caption{\small The numerical behavior of the   approximate   solution (left) and absolute error (right) for Example \ref{ex4}.  }
 \label{Fg4}
\end{figure}
\section{Conclusion}\label{S6}
In this paper, a new  numerical method based on the  CWs was proposed
to obtain an approximate solution for the variable-order time  fractional mobile-immobile advection-dispersion model. To this end, a new  operational matrix of variable-order fractional derivative for the  CWs was obtained and  employed  to obtain the approximate solution for the problem under study. Along the way a new family of piecewise functions was introduced and used to obtain a general approach for forming this matrix.
 In the proposed method,  solution of the problem under consideration was expanded in terms of the the  CWs.
The operational matrix of variable-order fractional derivative  and some properties of CWs were employed to transform
 its solution to the solution of a linear system of algebraic equations, which greatly simplified the problem as well as  achieved a good approximate solution for it.  Our proposed method is very efficient and convenient in solving such initial-boundary value problems because all the  conditions are used.  Also, the implementation of the proposed method is very simple for   solution of  the  problem under consideration.
 The  accuracy  of the proposed method was shown for some examples, which shows that our proposed method is  very  accurate for the problem under study.


\begin{thebibliography}{10}

\bibitem{e1}
H.~T.~C. Pedro, M.~H. Kobayashi, J.~M.~C. Pereira, and C.~F.~M. Coimbra,
  ``Variable order modeling of diffusive–convective effects on the oscillatory
  flow past a sphere,'' {\em J. Vib. Control}, vol.~14, pp.~1569--1672, 2008.

\bibitem{e2}
L.~E.~S. Ramirez and C.~F.~M. Coimbra, ``On the selection and meaning of
  variable order operators for dynamic modelling,'' {\em Int. J. Differ. Equ.},
  vol.~2010, pp.~846107, 16 pp, 2010.

\bibitem{e3}
L.~E.~S. Ramirez and C.~Coimbra, ``On the variable order dynamics of the
  nonlinear wake caused by a sedimenting particle,'' {\em Physica D}, vol.~240,
  pp.~1111--1118, 2011.

\bibitem{1}
H.~Sun, W.~Chen, H.~Wei, and Y.~Chen, ``A comparative study of constant-order
  and variable-order fractional models in characterizing memory property of
  systems,'' {\em Eur. Phys. J. Spec. Top.}, vol.~193, pp.~185--192, 2011.

\bibitem{2}
J.~J. Shyu, S.~C. Pei, and C.~H. Chan, ``An iterative method for the design of
  variable fractional-order fir differintegrators,'' {\em Signal Process},
  vol.~89, pp.~320--327, 2009.

\bibitem{3}
C.~Coimbra, ``Mechanics with variable-order differential operators,'' {\em Ann.
  Phys.}, vol.~12(11-12), pp.~692--703, 2003.

\bibitem{20}
A.~V. Chechkin, R.~Gorenflo, and I.~M. Sokolov, ``Fractional diffusion in
  inhomogeneous media,'' {\em J. Phys. A: Math. Gen.}, vol.~38, pp.~679--684,
  2005.

\bibitem{21}
F.~Santamaria, S.~Wils, E.~D. Schutter, and G.~J. Augustine, ``Anomalous
  diffusion in purkinje cell dendrites caused by spines,'' {\em Neuron},
  vol.~52, pp.~635--648, 2006.

\bibitem{22}
H.~G. Sun, W.~Chen, and Y.~Q. Chen, ``Variable-order fractional differential
  operators in anomalous diffusion modeling,'' {\em Phys. A}, vol.~388,
  pp.~4586--4592, 2009.

\bibitem{23}
H.~G. Sun, Y.~Q. Chen, and W.~Chen, ``Randomorder fractional differential
  equation models,'' {\em Sign. Process.}, vol.~91, pp.~525--530, 2011.

\bibitem{24}
S.~Umarov and S.~Steinberg, ``Variable order differential equations and
  diffusion with changing modes,'' {\em Zeitschrift fr Analysis und ihre
  Anwendungen}, vol.~28, pp.~431--450, 2009.

\bibitem{4}
C.~Lorenzo and T.~Hartley, ``Variable order and distributed order fractional
  operators,'' {\em Nonlinear Dyn.}, vol.~29, pp.~57--98, 2002.

\bibitem{5}
Y.~Liu, Z.~Fang, H.~Li, and S.~He, ``A mixed finite element method for a
  time-fractional fourth-order partial differential equation,'' {\em Appl.
  Math. Comput.}, vol.~243, pp.~703--717, 2014.

\bibitem{6}
A.~Atangana and D.~Baleanu, ``Numerical solution of a kind of fractional
  parabolic equations via two difference schemes,'' {\em Abstr. Appl. Anal.},
  vol.~2013, pp.~828764,8, 2013.

\bibitem{7}
M.~M. Meerschaert and C.~Tadjeran, ``Finite difference approximations for
  fractional advection dispersion equations,'' {\em J. Comput. Appl. Math.},
  vol.~172, pp.~65--77, 2004.

\bibitem{8}
Y.~Zhang, ``A finite difference method for fractional partial differential
  equation,'' {\em Appl. Math. Comput.}, vol.~215, pp.~524--529, 2009.

\bibitem{9}
C.~Tadjeran, M.~M. Meerschaert, and H.~P. Scheffler, ``A second order accurate
  numerical approximation for the fractional diffusion equation,'' {\em J.
  Comput. Phys.}, vol.~213, pp.~205--213, 2006.

\bibitem{10}
R.~Lin, F.~Liu, V.~Anh, and I.~Turner, ``Stability and convergence of a new
  explicit finite-difference approximation for the variable-order nonlinear
  fractional diffusion equation,'' {\em Applied Mathematics and Computation},
  vol.~212, pp.~435--445, 2009.

\bibitem{11}
S.~Shen, F.~Liu, J.~Chen, I.~Turner, and V.~Anh, ``Numerical techniques for the
  variable order time fractional diffusion equation,'' {\em Applied Mathematics
  and Computation}, vol.~218, pp.~10861--10870, 2012.

\bibitem{12}
Y.~Chen, L.~Liu, B.~Li, and Y.~Sun, ``Numerical solution for the variable order
  linear cable equation with bernstein polynomials,'' {\em Applied Mathematics
  and Computation}, vol.~238, pp.~329--341, 2014.

\bibitem{13}
M.~G. .~P. Manohar, ``Matrix method for numerical solution of space-time
  fractional diffusion-wave equations with three space variables,'' {\em Afr.
  Mat.}, vol.~25, pp.~161--181, 2014.

\bibitem{14}
N.~H. Sweilam, M.~M. Khader, and H.~M. Almarwm, ``Numerical studies for the
  variable-order nonlinear fractional wave equation,'' {\em Fractional Calculus
  and Applied Analysis}, vol.~15, pp.~669--683, 2012.

\bibitem{15}
H.~Sun, W.~Chen, C.~Li, and Y.~Chen, ``Finite difference schemes for
  variable-order time fractional diffusion equation,'' {\em International
  Journal of Bifurcation and Chaos}, vol.~22(4), p.~1250085 (16 pages), 2012.

\bibitem{16}
P.~Zhung, F.~Liu, V.~Anh, and I.~Turner, ``Numerical methods for the
  variable-order fractional advection-diffusion equation with a nonlinear
  source term,'' {\em SIAM J. NUMER. ANAL}, vol.~47(3), pp.~1760--1781, 2009.

\bibitem{17}
A.~Atangana, ``On the stability and convergence of the time-fractional variable
  order telegraph equation,'' {\em Journal of Computational Physics}, vol.~293,
  pp.~104--114, 2015.

\bibitem{18}
Y.-M. Chen, Y.-Q. Wei, D.-Y. Liu, and H.~Yu, ``Numerical solution for a class
  of nonlinear variable order fractional differential equations with legendre
  wavelets,'' {\em Applied Mathematics Letters}, vol.~46, pp.~83--88, 2015.

\bibitem{19}
V.~A. P.~Zhuang, F.~Liu and I.~Turner, ``Numerical methods for the
  variable-order fractional advection-diffusion equation with a nonlinear
  source term,'' {\em Society for Industrial and Applied Mathematics},
  vol.~47(3), pp.~1760--1781, 2009.

\bibitem{H0}
M.~H. Heydari, M.~R. Hooshmandasl, and F.~Mohammadi, ``Legendre wavelets method
  for solving fractional partial differential equations with dirichlet boundary
  conditions,'' {\em Applied Mathematics and Computation}, vol.~234,
  pp.~267--276, 2014.

\bibitem{H1}
M.~Heydari, M.~Hooshmandasl, G.~Barid~Loghmani, and C.~Cattani, ``Wavelets
  galerkin method for solving stochastic heat equation,'' {\em International
  Journal of Computer Mathematics}, pp.~1--18, 2015.

\bibitem{H2}
M.~Heydari, M.~Hooshmandasl, F.~M. Ghaini, and C.~Cattani, ``Wavelets method
  for the time fractional diffusion-wave equation,'' {\em Physics Letters A},
  vol.~379, no.~3, pp.~71--76, 2015.

\bibitem{H3}
M.~H. Heydari, M.~R. Hooshmandasl, F.~Maalek~Ghaini, and F.~Fereidouni,
  ``Two-dimensional Legendre wavelets for solving fractional poisson equation
  with dirichlet boundary conditions,'' {\em Engineering Analysis with Boundary
  Elements}, vol.~37, pp.~1331--1338, 2013.

\bibitem{H01}
H.~Zhang, F.~Liu, M.~S. Phanikumar, and M.~M. Meerschaert, ``A novel numerical
  method for the time variable fractional order mobile–immobile
  advection–dispersion model,'' {\em Computers and Mathematics with
  Applications}, vol.~66, pp.~693--701, 2013.

\bibitem{MM1}
M.~Heydari, M.~Hooshmandasl, and F.~M. Ghaini, ``A new approach of the
  Chebyshev wavelets method for partial differential equations with boundary
  conditions of the telegraph type,'' {\em Applied Mathematical Modelling},
  vol.~32, pp.~1597--1606, 2014.

\bibitem{MM2}
M.~Heydari, M.~Hooshmandasl, F.~Mohammadi, and C.~Cattani, ``Wavelets method
  for solving systems of nonlinear singular fractional volterra
  integro-differential equations,'' {\em Commun Nonlinear Sci Numer Simulat},
  vol.~19, pp.~37--48, 2014.

\bibitem{MM3}
S.~G. Hosseini and F.~Mohammadi, ``A new operational matrix of derivative for
  Chebyshev wavelets and its applications in solving ordinary differential
  equations with non analytic solution,'' {\em Appl. Math. Sci.}, vol.~5(51),
  pp.~2537--25448, 2011.

\end{thebibliography}

\end{document}